\icmltitlerunning{Degrees of Freedom of the Group Lasso}
\begin{document} 

\twocolumn[
\icmltitle{Degrees of Freedom of the Group Lasso}
\icmlauthor{Samuel~Vaiter, Charles~Deledalle, Gabriel~Peyr\'{e}}{vaiter@ceremade.dauphine.fr}
\icmladdress{CEREMADE, CNRS, Universit\'{e} Paris-Dauphine, France}
\icmlauthor{Jalal~Fadili}{}
\icmladdress{GREYC, CNRS-ENSICAEN-Universit\'{e} de Caen, France}
\icmlauthor{Charles~Dossal}{}
\icmladdress{IMB, Universit\'{e} Bordeaux 1, France}

\icmlkeywords{sparsity, group lasso, degrees of freedom, sensitivity analysis, ICML}

\vskip 0.3in
]

\begin{abstract}
This paper studies the sensitivity to the observations of the block/group Lasso solution to an overdetermined linear regression model.
Such a regularization is known to promote sparsity patterns structured as nonoverlapping groups of coefficients.
Our main contribution provides a local parameterization of the solution with respect to the observations.
As a byproduct, we give an unbiased estimate of the degrees of freedom of the group Lasso.
Among other applications of such results, one can choose in a principled and objective way the regularization parameter of the Lasso through model selection criteria.
\end{abstract}

\section{Introduction} 
\label{sec:introduction}

This paper deals with the overdetermined linear regression model of the form $y = \XX \beta_0 + \epsilon$ where $y \in \RR^Q$ is the observation/response vector, $\beta_0 \in \RR^N$ the regression vector, $\XX$ is the design matrix whose columns are linearly independent, and $\epsilon$ is an additive noise.
Note that $Q > N$ and $\XXX$ is an invertible matrix.

\subsection{Group Lasso} 
\label{sub:group}

A block segmentation $\Bb$ corresponds to a disjoint union of the set of indices i.e.~$\bigcup_{b \in \Bb} = \ens{1,\ldots,N}$ and for each $b, b' \in \Bb, b \cap b' = \emptyset$.
For $\beta \in \RR^N$, for each $b \in \Bb$, $x_b=(\beta_i)_{i \in b}$ is a subvector of $\beta$ whose entries are indexed by the block $b$, where $|b|$ is the cardinality of $b$. 

We consider the Group Lasso or Block Sparse regularization introduced by \cite{bakin1999adaptive,yuan2006model} which reads
\begin{equation}\label{eq-group-lasso}\tag{$\lasso$}
  \umin{ \beta \in \RR^N }  
  \frac{1}{2}\norm{y - \XX \beta}^2 
  + \lambda \sum_{b \in \Bb} \norm{\beta_b} ,
\end{equation}
where $\lambda > 0$ is the so-called regularization parameter and $\norm{\cdot}$ is the $\ldeux$-norm.
Note that if each block $b$ is of size 1, we recover the standard Lasso \cite{tibshirani1996regre}.


\subsection{Degrees of Freedom} 
\label{sub:dof}

We focus in this paper on the variations of the solution $\xsoly(y)$ of $\lasso$ with respect to the observations $y$.
This turns out to be a pivotal ingredient to compute the effective degrees of freedom (DOF) usually used to quantify the complexity of a statistical modeling procedure.

Let $\hat{\mu}(y)=\XX \xsoly(y)$ be the response or the prediction associated to the estimator $\xsoly(y)$ of $\beta_0$, and let $\mu_0 = \XX \beta_0$.
It is worth noting that $\hat{\mu}(y)$ is always uniquely defined, although when $\xsoly(y)$ is not as is the case of rank-deficient or underdetermined design matrix $X$.
Note that any estimator $\hat{\mu}$ of $\mu_0$ might be considered.
We also make the assumption that $\epsilon$ is an additive white Gaussian noise term $\epsilon \sim \Nn(0,\sigma^2 I_Q)$, hence $y$ follows the law $\Nn(\mu_0,\sigma^2 I_Q)$ and according to \cite{efron1986biased}, the DOF is given by
\begin{equation*}
\DOF = 
\sum_{i=1}^Q \frac{\mathrm{cov}(y_i, [\hat{\mu}(y)]_i)}{\sigma^2} ~.
\end{equation*}
The well-known Stein's lemma asserts that if $\hat{\mu}$ is weakly differentiable then its divergence is an unbiased estimator of its DOF, i.e.
\begin{equation*}
  \hat{\DOF} = \tr(\partial \hat{\mu}(y)) \qandq \EE_\epsilon(\hat{\DOF}) = \DOF.
\end{equation*}
An unbiased estimator of the DOF provides an unbiased estimate for the prediction risk of $\hat{\mu}(y)$ through e.g. the Mallow's $C_p$ \cite{mallows1973some}, the AIC \cite{akaike1973information}, the SURE \cite{stein1981estimation} or the GCV \cite{golub1979generalized}.
These quantities can serve as model selection criteria to assess the accuracy of a candidate model.


\subsection{Previous Works} 
\label{sub:pw}

In the special case of standard Lasso with a linearly independent design, \cite{zou2007degrees} show that the number of nonzero coefficients is an unbiased estimate for the degrees of freedom.
This work is extended in \cite{DOSSAL-2011-682903} to an arbitrary design matrix.
The DOF of the analysis sparse regularization (a.k.a.~generalized Lasso in statistics) is studied in \cite{tibshirani2012dof,vaiter-local-behavior}.
A formula of an estimate of the DOF for the group Lasso when the design is orthogonal within each group is conjectured in \cite{yuan2006model}.
\cite{kato2009degrees} studies the DOF of a generalization of the Lasso where now the regression coefficients are constrained to a closed convex set.
He provides an unbiased estimate of the DOF for the constrained version of the group Lasso under the same orthogonality assumption on $\XX$ as \cite{yuan2006model}.
An estimate of the DOF for the group Lasso is also given in \cite{solo2010threshold} by an heuristic proof in the full column rank case, but its unbiasedness is not proved.


\subsection{Contributions} 
\label{sub:contributions}

This paper proves a general result (Theorem \ref{thm-local}) on the variations of the solutions to \lasso with respect to the observation/response vector $y$.
With such a result on hand, Theorem \ref{thm-dof} provides a provably unbiased estimate of the DOF.
These contributions are detailed in Sections \ref{sub:contrib-local} and \ref{sub:contrib-dof} below. The proofs are deferred to Section \ref{sec:proofs} awaiting inspection by the interested reader.

\subsection{Notations} 
\label{sub:notations}

We start by some notations used in the sequel.
We extend the notion of support, commonly used in sparsity by defining the \bsup $\bs(\beta)$ of $\beta \in \RR^N$ as
\begin{equation*}
  \bs(\beta) = \enscond{b \in \Bb}{\norm{\beta_b} \neq 0}.
\end{equation*}
The size of $\bs(\beta)$ is defined as $\abs{\bs(\beta)} = \sum_{b \in \Bb} \abs{b}$.
We denote by $\XX_{I}$, where $I$ is a \bsup, the matrix formed by the columns $\XX_i$ where $i$ is an element of $b \in I$.
We introduce the following block-diagonal operator
\begin{equation*}
  \delta_\beta : v \in \RR^{|I|} \mapsto ( v_b / \norm{\beta_b} )_{b \in \Bb} \in \RR^{|I|} .
\end{equation*}
and
\begin{equation*}
  P_\beta : v \in \RR^{|I|} \mapsto ( P_{\beta_b^\bot}( v_b ) )_{ b \in \Bb } \in \RR^{|I|}
\end{equation*}
where $P_{\beta_b^\bot}$ is the projector orthogonal to $x_b$.
For any operator $A$, we denote $A^{\ins{T}}$ its adjoint.



\section{Main results} 
\label{sec:main_results}

Note that as the $\XX$ is assumed full column rank, $\lasso$ has exactly one global minimizer $\xsoly(y)$.
Hence, we define the single-valued mapping $y \mapsto \xsoly(y)$.

\subsection{Local Parameterization} 
\label{sub:contrib-local}

Let $I$ be the \bsup of some vector $\beta$.
For any block $b \not\in I$, we define
\begin{equation*}
  \Hh_{I,b} \!=\! 
  \Big\{ 
     y \!\in\! \RR^Q \backslash 
     \exists \beta  \!:\!
     \forall c \!\in\! I,
     (\norm{ \XX_b^{\ins{T}} r},
      \XX_c^{\ins{T}} r)
      = 
      (\lambda,\lambda \frac{\beta_c}{\norm{\beta_c}})
  \Big\} .
\end{equation*}
where $r = y - \XX_I \beta$.
  
\begin{defn}\label{defn:h}
  The \emph{transition space} $\Hh$ is defined as
  \begin{equation*}
    \Hh = \bigcup_{I \subset \Ii} \bigcup_{b \not\in I} \Hh_{I,b},
  \end{equation*}
  where $\Ii$ is the set of sub-sets of $\{ 0,\ldots,N-1 \}$ obtained as unions of blocks.
\end{defn}

We prove the following sensitivity theorem
\begin{thm}\label{thm-local}
  Let $y \not\in \Hh$, and $I = \bs(\xsoly(y))$ the \bsup of $\xsoly(y)$.
  There exists a neighborhood $\Oo$ of $y$ such that
  \begin{enumerate}
    \item the \bsup of $\xsoly(y)$ is constant on $\Oo$, i.e.~
    \begin{equation*}
      \forall y \in \Oo, \quad \bs(\xsoly(\bar y)) = I,
    \end{equation*}
    \item the mapping $\xsoly$ is $\Cc^1$ on $\Oo$ and its differential is such that
      \begin{equation}\label{eq-differential}
        [\partial \xsoly(\bar y)]_{I^c} = 0 \qandq [\partial \xsoly(\bar y)]_I = d(y) ,
      \end{equation}
    where
    \begin{equation*}
      d(y) = \bpa{ \XXX + \lambda \delta_{\xsoly(y)}  \circ P_{\xsoly(y)}  }^{-1} \XX_I^{\ins{T}}
    \end{equation*}
    and
    \begin{equation*}
      I^c = \enscond{b \in \Bb}{b \notin I}.
    \end{equation*}
  \end{enumerate}
\end{thm}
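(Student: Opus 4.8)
The plan is to read the result off from the first-order optimality conditions of \lasso and then apply the implicit function theorem on the active support, the transition space $\Hh$ serving precisely to exclude the configurations in which an inactive block sits on the boundary of its constraint. First I would record that, since $\XX$ has full column rank, the data term $\frac12\norm{y-\XX\beta}^2$ is strictly convex in $\beta$, so $\xsoly(y)$ is the unique $\beta$ for which $\XX^{\ins{T}}(y-\XX\beta)$ lies in $\lambda$ times the subdifferential of $\sum_b\norm{\beta_b}$ at $\beta$. Writing $I=\bs(\xsoly(y))$ and $r=y-\XX_I\,\xsoly(y)_I$, this inclusion splits into the equalities $\XX_b^{\ins{T}} r = \lambda\,\xsoly(y)_b/\norm{\xsoly(y)_b}$ for the active blocks $b\in I$ and the inequalities $\norm{\XX_b^{\ins{T}} r}\le\lambda$ for the inactive blocks $b\notin I$. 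The hypothesis $y\notin\Hh$ enters exactly here: were $\norm{\XX_b^{\ins{T}} r}=\lambda$ for some $b\notin I$, then $y$ would lie in $\Hh_{I,b}\subset\Hh$; hence every inactive inequality is in fact strict, $\norm{\XX_b^{\ins{T}} r}<\lambda$.

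Next I would freeze the support and set up the implicit function theorem. On the open set where $\beta_b\ne 0$ for all $b\in I$, the map $g(\beta_I)=(\beta_b/\norm{\beta_b})_{b\in I}$ is $\Cc^\infty$, and a direct computation gives its Jacobian block by block as $\frac{1}{\norm{\beta_b}}P_{\beta_b^\bot}$, that is exactly $\delta_\beta\circ P_\beta$. Setting $F(\beta_I,\bar y)=\XX_I^{\ins{T}}(\XX_I\beta_I-\bar y)+\lambda\,g(\beta_I)$, the active equalities read $F(\xsoly(y)_I,y)=0$, and $\partial_{\beta_I}F=\XX_I^{\ins{T}}\XX_I+\lambda\,\delta_\beta\circ P_\beta$. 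This operator is invertible: $\XX_I^{\ins{T}}\XX_I$ is positive definite because $\XX_I$ has full column rank, while $\delta_\beta\circ P_\beta$ is symmetric positive semidefinite since its quadratic form equals $\sum_{b\in I}\norm{P_{\beta_b^\bot}v_b}^2/\norm{\beta_b}\ge 0$, so the sum is positive definite. The implicit function theorem then produces a neighborhood $\Oo$ of $y$ and a $\Cc^1$ branch $\bar y\mapsto\tilde\beta_I(\bar y)$ with $\tilde\beta_I(y)=\xsoly(y)_I$ and $F(\tilde\beta_I(\bar y),\bar y)=0$, whose differential, using $\partial_{\bar y}F=-\XX_I^{\ins{T}}$, is
\[
  \partial\tilde\beta_I(\bar y) = -(\partial_{\beta_I}F)^{-1}\partial_{\bar y}F = \bpa{\XX_I^{\ins{T}}\XX_I+\lambda\,\delta_{\tilde\beta}\circ P_{\tilde\beta}}^{-1}\XX_I^{\ins{T}},
\]
which is the claimed $d(\bar y)$ (with $\XXX=\XX_I^{\ins{T}}\XX_I$ and $\tilde\beta=\xsoly$ on $\Oo$ by the next step).

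Finally I would check that this local branch is the genuine minimizer. Extending $\tilde\beta(\bar y)$ by zero outside $I$, the active equalities hold by construction and the active blocks stay nonzero for $\bar y$ near $y$ by continuity, while the strict inactive inequalities $\norm{\XX_b^{\ins{T}}(\bar y-\XX_I\tilde\beta_I(\bar y))}<\lambda$ established at $y$ persist on a possibly smaller neighborhood, again by continuity. Thus $\tilde\beta(\bar y)$ satisfies the full optimality conditions, and by uniqueness $\tilde\beta(\bar y)=\xsoly(\bar y)$ on $\Oo$; this yields at once $\bs(\xsoly(\bar y))=I$, the identity $[\partial\xsoly(\bar y)]_{I^c}=0$ coming from the frozen zero block, and $[\partial\xsoly(\bar y)]_I=d(\bar y)$. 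The main obstacle is exactly this last promotion step: the implicit function theorem only controls the smooth branch along the fixed support $I$, so the whole force of $y\notin\Hh$ is to guarantee that no inactive block crosses the threshold $\norm{\XX_b^{\ins{T}} r}=\lambda$ — i.e.~that $y$ is not a support-transition point — which is what forces the locally smooth branch to coincide with the true solution rather than with a mere critical point of the support-restricted problem.
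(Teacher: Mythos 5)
Your proposal is correct and follows essentially the same route as the paper: the same implicit-function-theorem argument applied to the map $\Gamma(\alpha_I,\bar y)=\XX_I^{\ins{T}}(\XX_I\alpha_I-\bar y)+\lambda\,\no(\alpha_I)$, the same invertibility argument for $\XXX+\lambda\,\delta_\beta\circ P_\beta$ (positive definite plus positive semidefinite), the same use of $y\notin\Hh$ to make the inactive constraints strict, and the same continuity-plus-uniqueness step to identify the smooth branch with $\xsoly$ on a neighborhood. Your explicit computation of the Jacobian of the normalization map and of the quadratic form of $\delta_\beta\circ P_\beta$ only fills in details the paper states without proof.
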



\subsection{Degrees of Freedom} 
\label{sub:contrib-dof}

We consider the estimator $\hat{\mu}(y) = \XX \xsoly(y)$.
\begin{thm}\label{thm-dof}
  Let $\lambda > 0$.
  The mapping $y \mapsto \hat{\mu}(y)$ is of class $\Cc^1$ on $\RR^Q \setminus \Hh$ and,
  \begin{equation}\label{eq:diverg}
    \diverg(\hat{\mu}(y)) = \tr (\XX_I d(y)) ,
  \end{equation}
  where $\xsoly(y)$ is the solution of $\lasso$ and $I = \bs(\xsoly(y))$.
  Moreover, the set $\Hh$ has zero Lebesgue measure, thus if $\epsilon \sim \Nn(0,\sigma^2 I_Q)$, equation \eqref{eq:diverg} is an unbiased estimate of the DOF of the group Lasso.
\end{thm}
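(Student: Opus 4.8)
The plan is to obtain the first two assertions as immediate corollaries of Theorem \ref{thm-local} and to spend the real effort on the measure-zero claim. Since $\hat\mu = \XX \circ \xsoly$ and Theorem \ref{thm-local} gives that $\xsoly$ is $\Cc^1$ on a neighborhood of every $y \notin \Hh$, precomposition with the bounded linear map $\XX$ shows $\hat\mu \in \Cc^1(\RR^Q \setminus \Hh)$. Differentiating and invoking \eqref{eq-differential}, the columns indexed by $I^c$ contribute nothing, so $\partial\hat\mu(y) = \XX\,\partial\xsoly(y) = \XX_I d(y)$, and taking the trace yields \eqref{eq:diverg}.

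For the measure-zero statement, since $\Hh = \bigcup_{I}\bigcup_{b\notin I}\Hh_{I,b}$ is a \emph{finite} union, it suffices to show each $\Hh_{I,b}$ is null. I would first re-read the defining conditions of $\Hh_{I,b}$ as a parameterization. Set $u(\beta)=(\beta_c/\norm{\beta_c})_{c\in I}\in\RR^{|I|}$, which is smooth on the open set where every block of $\beta_I$ is nonzero, and let $W_I=\ker\XX_I^{\ins{T}}$, of dimension $Q-|I|$ since $\XX_I$ has full column rank. The support conditions $\XX_c^{\ins{T}} r = \lambda\,\beta_c/\norm{\beta_c}$ read $\XX_I^{\ins{T}} r = \lambda\, u(\beta)$, so $r = r_0(\beta)+w$ with the smooth particular solution $r_0(\beta)=\lambda\,\XX_I(\XX_I^{\ins{T}}\XX_I)^{-1}u(\beta)$ and $w\in W_I$. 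Writing $y=r+\XX_I\beta$, every point of $\Hh_{I,b}$ is then the image of a point of
\begin{equation*}
  C=\enscond{(\beta,w)}{g(\beta,w)=\lambda}, \quad g(\beta,w)=\norm{\XX_b^{\ins{T}}(r_0(\beta)+w)},
\end{equation*}
under the smooth map $\Psi(\beta,w)=r_0(\beta)+w+\XX_I\beta$, whose domain has dimension $|I|+(Q-|I|)=Q$.

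The crux, and the step I expect to be the main obstacle, is to show that $\lambda$ is a \emph{regular} value of $g$, i.e.~that $g$ has no critical point on $C$; this is where the linear independence of the columns of $\XX$ is essential (one cannot simply invoke Sard, since the value $\lambda$ is fixed). Putting $z=\XX_b^{\ins{T}} r$, one has $\norm{z}=\lambda>0$ on $C$, so $g$ is smooth there, and for $v\in W_I$ the directional derivative is $(\XX_b z)^{\ins{T}} v/\lambda$. It vanishes for all $v\in W_I$ iff $\XX_b z\in W_I^{\bot}=\mathrm{Im}(\XX_I)$; but $\XX_b z\in\mathrm{Im}(\XX_b)$, and since the columns of $\XX$ are linearly independent one has $\mathrm{Im}(\XX_b)\cap\mathrm{Im}(\XX_I)=\{0\}$, while $\XX_b$ injective and $z\neq 0$ force $\XX_b z\neq 0$ — a contradiction. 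Hence $\nabla g\neq 0$ on $C$, so by the regular level set theorem $C$ is a $\Cc^1$ hypersurface of dimension $Q-1$. As the $\Cc^1$ image of a $(Q-1)$-manifold into $\RR^Q$ is Lebesgue-null (covering $C$ by charts and using that $\Cc^1$, hence locally Lipschitz, maps send null sets to null sets), each $\Hh_{I,b}=\Psi(C)$ is null, and therefore so is $\Hh$.

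Finally, for unbiasedness I would check the hypotheses of Stein's lemma. The prediction map is $\hat\mu(y)=\XX\xsoly(y)=\mathrm{prox}_{\lambda\tilde\Omega}(y)$ restricted to $\mathrm{Im}(\XX)$, where $\tilde\Omega(v)=\sum_{b}\norm{(\XX^{+}v)_b}$ is convex, so $\hat\mu$ is firmly nonexpansive, hence globally Lipschitz. A Lipschitz map is weakly differentiable, and by Rademacher its classical Jacobian exists a.e.~and coincides with the weak one; off the null set $\Hh$ this Jacobian is $\XX_I d(y)$ by part two, with bounded (hence integrable) trace. Stein's lemma as recalled in the excerpt then yields $\EE_\epsilon[\tr(\XX_I d(y))]=\DOF$, establishing unbiasedness.
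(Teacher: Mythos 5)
Your proof is correct, and on the measure-zero claim it shares the paper's core strategy: exhibit each $\Hh_{I,b}$ as the image, under a locally Lipschitz map into $\RR^Q$, of a $(Q-1)$-dimensional manifold, with the dimension drop resting on exactly the same rank computation (linear independence of the columns of $(\XX_b | \XX_I)$ plus $\XX_b^{\ins{T}}r \neq 0$ forced by $\norm{\XX_b^{\ins{T}}r} = \lambda > 0$). The packaging differs, though. The paper keeps all $1+\abs{I}$ constraints implicit: it applies the preimage theorem to $\psi(r,\beta) = \bpa{\norm{\XX_b^{\ins{T}}r}^2 - 1, \XX_I^{\ins{T}}r - \no(\beta)}$ in the product space $\RR^Q \times \RR^{\abs{I}}$, then transports the resulting $(Q-1)$-manifold by the linear diffeomorphism $(r,\beta)\mapsto(r+\XX_I\beta, \XX_I^{\ins{T}}r)$ and projects onto the $y$-component, controlling the image via monotonicity of Hausdorff dimension under $1$-Lipschitz maps. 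You instead eliminate the $\abs{I}$ affine constraints explicitly, writing $r = r_0(\beta) + w$ with $w \in \Ker \XX_I^{\ins{T}}$, so that a single scalar equation remains in a $Q$-dimensional parameter space and the regular level set theorem applies; your push-forward $\Psi$ then plays the role of the paper's diffeomorphism and projection combined. The two arguments are equivalent in substance; yours makes the dimension count more transparent, while the paper's avoids introducing the particular solution $r_0$. Where you genuinely improve on the paper is the final appeal to Stein's lemma: the paper argues that $\hat{\mu}$ is ``differentiable almost everywhere, hence weakly differentiable'', which is not a valid implication in general (a function can be smooth off a null set without being weakly differentiable). Your observation that $\hat{\mu}$ is the proximal map of the convex function $\lambda\tilde\Omega + \iota_{\mathrm{Im}\,\XX}$, hence firmly nonexpansive, hence globally Lipschitz --- so that Rademacher's theorem identifies the a.e.\ Jacobian $\XX_I d(y)$ with the weak Jacobian and gives integrability of its trace --- supplies precisely the hypothesis Stein's lemma requires, and is the standard correct justification of this step.
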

We specify this result for the Block Soft Thresholding
\begin{cor}\label{cor-particular}
  If $\XX = \Id$, one has
  \begin{equation*}
    \hat{\DOF} = \abs{J} - \lambda \sum_{b \subset J} \dfrac{\abs{b} - 1}{\norm{y_b}}
  \end{equation*}
  where $J = \bigcup \enscond{b \in \Bb}{\norm{y_b} > \lambda}$.
\end{cor}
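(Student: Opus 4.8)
The plan is to derive the formula by specialising the divergence identity \eqref{eq:diverg} of Theorem~\ref{thm-dof} to the case $\XX=\Id$. When the design is the identity, $\lasso$ decouples across blocks: for each $b\in\Bb$ the objective restricts to the block soft-thresholding problem $\umin{\beta_b}\frac12\norm{y_b-\beta_b}^2+\lambda\norm{\beta_b}$, whose unique minimiser is $\xsoly(y)_b=(1-\lambda/\norm{y_b})_+\,y_b$. First I would record the two consequences of this explicit solution that drive the whole computation: the \bsup is exactly $J=\bigcup\enscond{b\in\Bb}{\norm{y_b}>\lambda}$, and on each active block $\xsoly(y)_b$ is colinear with $y_b$, with $\norm{\xsoly(y)_b}=\norm{y_b}-\lambda>0$.

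Next I would substitute $\XX=\Id$ into Theorem~\ref{thm-dof}. Here $\XX_I$ is the canonical inclusion of the support $I=J$, so its columns are orthonormal and $\XX_I^{\ins{T}}\XX_I=\Id$; likewise $\XXX$ reduces to $\Id$ in the definition of $d(y)$. By cyclicity of the trace,
\begin{equation*}
  \tr(\XX_I d(y)) = \tr\,\bpa{\XX_I^{\ins{T}}\XX_I+\lambda\,\delta_{\xsoly(y)}\circ P_{\xsoly(y)}}^{-1} = \tr\,\bpa{\Id+\lambda\,\delta_{\xsoly(y)}\circ P_{\xsoly(y)}}^{-1}.
\end{equation*}
Since both $\delta_{\xsoly(y)}$ and $P_{\xsoly(y)}$ are block-diagonal by construction, so is the operator inside the inverse, and the trace splits as a sum over the blocks $b\subset J$. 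I would therefore reduce the whole problem to analysing a single block.

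The core step is the per-block spectral analysis. On a block $b\subset J$, the colinearity of $\xsoly(y)_b$ with $y_b$ means that $\delta_{\xsoly(y)}$ acts as multiplication by $1/(\norm{y_b}-\lambda)$ while $P_{\xsoly(y)}$ is the orthogonal projector onto the hyperplane $y_b^{\bot}$; being a scalar multiple of the identity, $\delta_{\xsoly(y)}$ commutes with that projector, so the two operators are simultaneously diagonalised by the splitting $\RR^{\abs{b}}=\mathrm{span}(y_b)\oplus y_b^{\bot}$. The block operator $\Id+\lambda\,\delta_{\xsoly(y)}\circ P_{\xsoly(y)}$ then acts as the identity on the one-dimensional line $\mathrm{span}(y_b)$ and as $\norm{y_b}/(\norm{y_b}-\lambda)$ times the identity on the $(\abs{b}-1)$-dimensional complement. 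Inverting eigenvalue by eigenvalue and summing yields the block contribution $1+(\abs{b}-1)\tfrac{\norm{y_b}-\lambda}{\norm{y_b}}=\abs{b}-\lambda\tfrac{\abs{b}-1}{\norm{y_b}}$, and summing over $b\subset J$ (with $\sum_{b\subset J}\abs{b}=\abs{J}$) gives precisely $\hat{\DOF}=\abs{J}-\lambda\sum_{b\subset J}\tfrac{\abs{b}-1}{\norm{y_b}}$.

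The main obstacle is the per-block spectral step: one must recognise that $\delta_{\xsoly(y)}$ and $P_{\xsoly(y)}$ commute on each block and are jointly diagonal in the basis adapted to $y_b$, so that inverting and tracing reduces to counting the two eigenspaces. The one subtlety worth flagging is that the projector in $P_{\xsoly(y)}$ is taken orthogonal to $\xsoly(y)_b$, not to $y_b$; the colinearity recorded in the first step is exactly what identifies the two hyperplanes and closes this gap. Everything else is a routine substitution into Theorem~\ref{thm-dof}, valid for $y\notin\Hh$, which by that theorem covers almost every $y$ and so suffices for the unbiased DOF estimate.
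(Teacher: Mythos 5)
Your proposal is correct, but it reaches the formula by a genuinely different computation than the paper. The paper never touches the general expression $d(y)$ from Theorem \ref{thm-local}: it writes down the closed-form block soft thresholding solution \eqref{eq:block-soft}, differentiates it directly on each active block to get $[\partial \xsoly(y)]_b : \alpha \mapsto \alpha - \frac{\lambda}{\norm{y_b}} P_{y_b^\bot}(\alpha)$, and then reads off the divergence from $\tr(P_{y_b^\bot}) = \abs{b}-1$, so the whole proof is one line of calculus plus the trace of a projector. You instead specialise the divergence identity \eqref{eq:diverg} of Theorem \ref{thm-dof}, which forces you to invert the operator $\Id + \lambda\, \delta_{\xsoly(y)} \circ P_{\xsoly(y)}$; your spectral analysis of that inverse is the nontrivial step, and it is carried out correctly: on each active block, colinearity of $\xsoly(y)_b$ with $y_b$ and $\norm{\xsoly(y)_b} = \norm{y_b} - \lambda$ give eigenvalue $1$ on $\mathrm{span}(y_b)$ and $\norm{y_b}/(\norm{y_b}-\lambda)$ on $y_b^\bot$, whence the block trace $\abs{b} - \lambda(\abs{b}-1)/\norm{y_b}$, matching the paper. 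What each approach buys: the paper's route is shorter and more elementary, bypassing the inversion entirely (it only needs Theorem \ref{thm-dof} for the almost-everywhere differentiability and Stein's lemma); yours is a useful consistency check that the abstract formula $\tr(\XX_I d(y))$ really does collapse to the stated expression, at the cost of the eigendecomposition argument and the (correctly flagged) subtlety that the projector in $P_{\xsoly(y)}$ is orthogonal to $\xsoly(y)_b$ rather than to $y_b$. One cosmetic point: in your display for $\tr(\XX_I d(y))$, the intermediate expression omits the factor $\XX_I^{\ins{T}}\XX_I$ that cyclicity of the trace actually produces; since $\XX_I^{\ins{T}}\XX_I = \Id$ when $\XX = \Id$, this is harmless, but the chain of equalities should be written as $\tr\bpa{\XX_I (\Id + \lambda \delta_{\xsoly(y)} \circ P_{\xsoly(y)})^{-1} \XX_I^{\ins{T}}} = \tr\bpa{(\Id + \lambda \delta_{\xsoly(y)} \circ P_{\xsoly(y)})^{-1} \XX_I^{\ins{T}} \XX_I}$ to be airtight.
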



\section{Proofs} 
\label{sec:proofs}

This section details the proofs of our results.
We introduce the following normalization operator
\begin{equation*}
  \no(\beta_I) = v \qwhereq \forall b \in I, v_b = \frac{\beta_b}{\norm{\beta_b}} .
\end{equation*} 
We use the following lemma in our proofs which is a straightforward consequence of the first order necessary and sufficient condition of a minimizer of the group Lasso problem $\lasso$.
\begin{lem}\label{lem:first-order}
  A vector $\beta \in \RR^N$ is the solution of \lasso if, and only if, these two conditions holds
  \begin{enumerate}
    \item On the \bsup $I = \bs(\beta)$,
    \begin{equation*}
      \XX_I^{\ins{T}}(y - \XX_I \beta_I) = \lambda \no(\beta_I) .
    \end{equation*}
    \item For all $b \in \Bb$ such that $b \not\in I$, one has
    \begin{equation*}
      \norm{\XX_b^{\ins{T}}(y - \XX_I \beta_I)} \leq \lambda .
    \end{equation*}
  \end{enumerate}
\end{lem}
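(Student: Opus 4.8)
The plan is to read Lemma~\ref{lem:first-order} as the Fermat rule $0 \in \partial F(\beta)$ for the convex objective of \lasso,
\begin{equation*}
  F(\beta) = \tfrac12 \norm{y - \XX\beta}^2 + \lambda \sum_{b \in \Bb} \norm{\beta_b},
\end{equation*}
which is the sum of a smooth convex term and a finite-valued, everywhere-defined convex term. Since $F$ is convex, $\beta$ is a global minimizer if and only if $0$ belongs to its subdifferential, so the entire lemma is an exercise in computing $\partial F(\beta)$. First I would record the contribution of the smooth part, whose gradient is $-\XX^{\ins{T}}(y-\XX\beta)$. Because each summand $\norm{\beta_b}$ is finite on all of $\RR^N$, the subdifferential sum rule applies without any qualification condition, and moreover the penalty separates over the disjoint blocks of $\Bb$; hence $\partial F(\beta) = -\XX^{\ins{T}}(y-\XX\beta) + \lambda \sum_{b\in\Bb}\partial\norm{\beta_b}$, with each block contributing independently.

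The second step is to insert the subdifferential of the Euclidean norm, which is where the two cases of the lemma originate. For a block with $\beta_b \neq 0$ the norm is differentiable and $\partial\norm{\beta_b} = \ens{\beta_b/\norm{\beta_b}}$, whereas for $\beta_b = 0$ one has the closed unit ball $\partial\norm{0} = \enscond{v_b}{\norm{v_b} \le 1}$. Reading the inclusion $0\in\partial F(\beta)$ blockwise then splits exactly along $I = \bs(\beta)$. For $b\in I$ it reads $\XX_b^{\ins{T}}(y-\XX\beta) = \lambda\, \beta_b/\norm{\beta_b}$; since $\beta$ vanishes off $I$ one may replace $\XX\beta$ by $\XX_I\beta_I$, and stacking these equalities over $b\in I$ recovers condition~1 through the normalization operator $\no$. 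For $b\notin I$ the inclusion becomes $\XX_b^{\ins{T}}(y-\XX_I\beta_I) = \lambda v_b$ for some $v_b$ in the unit ball, which is equivalent to $\norm{\XX_b^{\ins{T}}(y-\XX_I\beta_I)} \le \lambda$, i.e.\ condition~2.

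No separate sufficiency argument is needed beyond this computation, since for a convex $F$ the relation $0\in\partial F(\beta)$ is equivalent to global minimality in both directions simultaneously. I expect the only delicate point — and the main obstacle — to be the characterization of $\partial\norm{\cdot}$ at the origin together with the justification that the subdifferential genuinely decouples over the blocks; this must be handled cleanly so as to land on the non-strict inequality $\le \lambda$ in condition~2 rather than a strict one. Note finally that the full column rank hypothesis on $\XX$ is not used in this characterization: it only serves to make $F$ strictly convex and hence guarantees that the minimizer $\xsoly(y)$ is unique, so I would not invoke it in the proof of the lemma itself.
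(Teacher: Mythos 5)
Your proof is correct. The paper itself offers no proof of Lemma~\ref{lem:first-order}: it states the result as a ``straightforward consequence of the first order necessary and sufficient condition'' and defers to \cite{bach2008consistency}, and your Fermat-rule computation (sum rule for the finite-valued convex penalty, blockwise decoupling over the partition, and the two cases of $\partial\norm{\cdot}$ at nonzero points versus the unit ball at the origin) is exactly that standard argument, including the correct observation that full column rank is only needed for uniqueness of the minimizer, not for the characterization.
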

A proof of this lemma can be found in \cite{bach2008consistency}.

\subsection{Proof of Theorem \ref{thm-local}} 
\label{sub:local}

We will use this following lemma.
\begin{lem}\label{lem-invertible}
  Let $\beta \in \RR^N$ and $\lambda > 0$.
  Then $\XXX+ \lambda \delta_\beta \circ P_\beta$ is invertible.
\end{lem}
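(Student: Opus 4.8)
The plan is to show that the operator $M := \XXX + \lambda\,\delta_\beta \circ P_\beta$ is symmetric positive definite, from which invertibility is immediate. First I would reduce the claim to strict positivity of the associated quadratic form: it suffices to prove that $\langle M v, v\rangle > 0$ for every $v \neq 0$, since an operator whose quadratic form is strictly positive cannot annihilate a nonzero vector, and hence has trivial kernel.

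I would then split the form additively as
\[
  \langle M v, v\rangle = \langle \XXX v, v\rangle + \lambda\, \langle \delta_\beta \circ P_\beta\, v, v\rangle = \norm{\XX v}^2 + \lambda\, \langle \delta_\beta \circ P_\beta\, v, v\rangle .
\]
The first term is strictly positive whenever $v \neq 0$, because $\XX$ has linearly independent columns, so $\XX v \neq 0$ and $\XXX$ is positive definite. It therefore remains only to show that the second term is nonnegative. The key observation — and the single mildly delicate point — is that $\delta_\beta$ acts on each block $b$ simply as multiplication by the \emph{positive scalar} $1/\norm{\beta_b}$, so it commutes with the block-diagonal orthogonal projector $P_\beta$. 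Introducing the block-wise scaling $\delta_\beta^{1/2}$ (multiplication by $1/\norm{\beta_b}^{1/2}$ on block $b$), which again commutes with $P_\beta$, and using that $P_\beta = P_\beta^{\ins{T}} = P_\beta \circ P_\beta$ is an orthogonal projector, I would compute
\[
  \langle \delta_\beta \circ P_\beta\, v, v\rangle = \langle P_\beta\, \delta_\beta^{1/2} v,\, \delta_\beta^{1/2} v\rangle = \norm{P_\beta\, \delta_\beta^{1/2} v}^2 \geq 0 .
\]
This also shows in passing that $\delta_\beta \circ P_\beta$ is itself symmetric positive semidefinite, so that $M$ is genuinely symmetric positive definite.

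Combining the two estimates gives $\langle M v, v\rangle \geq \norm{\XX v}^2 > 0$ for all $v \neq 0$, so $M$ is invertible, as claimed. I expect the main obstacle to be conceptual rather than computational: one must recognize that $\delta_\beta \circ P_\beta$ is positive semidefinite even though it is a \emph{product} of two symmetric operators, a product that in general fails to be symmetric. This is what the commutativity stemming from the scalar, block-diagonal structure of $\delta_\beta$ repairs. I would also note, as a safeguard, that invertibility only ever requires positive definiteness of the \emph{symmetric part} of $M$; so even had symmetry of $\delta_\beta \circ P_\beta$ been less transparent, the strict positivity of $\langle M v, v\rangle$ would still have closed the argument.
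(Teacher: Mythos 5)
Your proof is correct and takes essentially the same route as the paper: both arguments decompose the operator as the positive definite $\XXX$ plus the positive semidefinite $\lambda\, \delta_\beta \circ P_\beta$, with the commutativity of the block-diagonal scaling $\delta_\beta$ and the projector $P_\beta$ as the key point. Your factorization through $\delta_\beta^{1/2}$ simply makes explicit the semidefiniteness claim that the paper asserts without detail.
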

\begin{proof}
We prove that $\XXX + \lambda \delta_\beta \circ P_\beta$ is symmetric positive definite.
Remark that $\XXX$ is a positive definite matrix.
Moreover, $\delta_\beta \circ P_\beta$ is symmetric positive semi-definite since both $\delta_\beta$ and $P_\beta$ are SDP and commute.
We conclude using the fact that the sum of a symmetric positive definite matrix and a symmetric positive semi-definite matrix is symmetric positive definite.
\end{proof}

Let $y \not\in \Hh$.
We define $I = \bs(\xsoly(y))$ the \bsup of the solution $\xsoly(y)$ of $\lasso$.
We define the following mapping 
\begin{equation*}
  \Gamma(\alpha_I,y) = \XX_I^{\ins{T}}(\XX_I \alpha_I - y) + \lambda \no(\alpha_I) .
\end{equation*}
Observe item 1 of Lemma \ref{lem:first-order} is equivalent to $\Gamma([\xsoly(y)]_I,y) = 0$.

Our proof is done in three steps.
We first (\textbf{1.}) prove there exists a mapping $\bar y \mapsto \solSS$ such that for every element of a neighborhood of $y$ one has $\Gamma([\solSS]_I, \bar y) = 0$ and $[\solSS]_{I^c} = 0$.
Then, we prove (\textbf{2.}) that $\solSS = \xsoly(\bar y)$ is the solution of $\lassoP{\bar y}{\lambda}$ in a neighborhood of $y$.
Finally, we obtain (\textbf{3.}) equation \eqref{eq-differential} from the implicit function theorem.

\textbf{1.} The derivative of $\Gamma$ with respect to the first variable reads on $(\RR^{\abs{I}} \setminus U) \times \RR^Q$
\begin{equation*}
    \partial_1 \Gamma(\beta_I,y) = \XXX + \lambda \delta_{\beta_I} \circ P_{\beta_I} .
\end{equation*}
where $U = \enscond{\alpha \in \RR^{\abs{I}}}{\exists b \in I: \alpha_b = 0}$.
The mapping $\partial_1 \Gamma$ is invertible according to Lemma \ref{lem-invertible}.
Hence, using the implicit function theorem, there exists a neighborhood $\tilde \Oo$ of $y$ such that we can define a mapping $\beta_I : \Oo \to \RR^{\abs{I}} $ of class $\Cc^1$ over $\tilde \Oo$ that satisfies for $\bar y \in \tilde \Oo$
\begin{equation*}
    \Gamma(\beta_I(\bar y),\bar y)  =  0
    \qandq
    \beta_I(y) = [\xsoly(y)]_I.
\end{equation*}
We then extend $\beta_I$ on $I^c$ as $\beta_{I^c}(\bar y)=0$, which defines a mapping $\solSS : \tilde \Oo \to \RR^N$.

\textbf{2.} Writting the first-order conditions on $\xsoly(y)$ on the blocks not included in the \bsup, one has
\begin{equation*}
  \forall b \notin I, \quad \norm{\XX_b^{\ins{T}}(y - \XX_I [\xsoly(y)]_I)} \leq \lambda .
\end{equation*}
Suppose there exists $b \notin I$ such that $\norm{ \XX_b^{\ins{T}}( \XX_I [\xsoly(y)]_I - y ) } = \lambda$.
Then $y \in \Hh_{I,b}$ since
\begin{equation*}
  \norm{\XX_b^{\ins{T}} r} = \lambda \qandq \XX_c^{\ins{T}} r = \lambda \dfrac{[\xsoly(y)]_c}{\norm{[\xsoly(y)]_c}} ,
\end{equation*}
for $r = y - \XX_I [\xsoly(y)]_I$, which is a contradiction with $y \notin \Hh$.
Hence,
\begin{equation*}
    \foralls b \notin I, \quad
    \norm{ \XX_b^{\ins{T}}( \XX_I [\xsoly(y)]_I - y ) } < \lambda .   
\end{equation*}
By continuity of $\bar y \mapsto \beta_I(\bar y)$ and since $\beta_I(y) = [\xsoly(y)]_I$, we can find a neighborhood $\Oo$ included in $\tilde \Oo$ such that for every $\bar y \in \Oo$, one has
\begin{equation*}
    \foralls b \notin I, \quad
    \norm{ \XX_b^{\ins{T}}( \XX_I \beta_I(\bar y) - \bar y ) } \leq \lambda .
\end{equation*}
Moreover, by definition of the mapping $\beta_I$, one has
\begin{equation*}
  \XX_I^{\ins{T}}(y - \XX_I \beta_I(\bar y)) = \lambda \no(\beta_I(\bar y)) \text{ and } \bs(\beta_I(\bar y)) = I .
\end{equation*}
According to Lemma \ref{lem:first-order}, the vector $\beta(\bar y)$ is solution of $\lassoP{\bar y}{\lambda}$.
Since $\lassoP{\bar y}{\lambda}$ admits a unique solution, $\xsoly(\bar y) = \beta(\bar y)$ for every $\bar y \in \Oo$.

\textbf{3.} Using the implicit function theorem, one obtains the derivative of $[\solSp]_I$ as
\begin{equation*}
    [\partial \xsoly(\bar y)]_I  = -
    \big( \partial_1 \Gamma([\xsoly(y)]_I,y) \big)^{-1} 
    \circ 
    \big( \partial_2 \Gamma([\xsoly(y)]_I,y) \big)        
\end{equation*}
where $\partial_2 \Gamma([\xsoly(y)]_I,y)= - \XX_I^{\ins{T}}$, which leads us to \eqref{eq-differential}.

\subsection{Proof of Theorem \ref{thm-dof}} 
\label{sub:dof}

We define for each $b \in \Bb$
\begin{align*}
  \Hh_{I,b}^\uparrow = \Big \{ &(r,\beta)  \in \RR^Q \times \RR^{|I|}  \, \, \backslash \\ 
    & \norm{ \XX_b^{\ins{T}}r } =  1 \qandq
    \forall g \in I, \quad \XX_g^{\ins{T}} r = \dfrac{\beta_g}{\norm{\beta_g}} \Big \}
    .
\end{align*}
We prove (\textbf{1.}) that for each $b \in I$, $\Hh_{I,b}^\uparrow$ is a manifold of dimension $Q - 1$.
Then (\textbf{2.}) we prove that $\Hh$ if of measure zero with respect to the Lebesgue measure on $\RR^Q$.
Finally (\textbf{3.}), we prove that $\hat{\DOF}$ is an unbiased estimate of the DOF.

\textbf{1.} Note that $\Hh_{I,b}^\uparrow = \psi^{-1}(\ens{0})$ where
\begin{equation*}
  \psi(r,\beta) \!=\! \left(\!\norm{\XX_b^{\ins{T}}r}^2 \!-\! 1,
  \XX_I^{\ins{T}} r - \no(\beta) \right) .
\end{equation*}
Remark that the adjoint of the differential of $\psi$
\begin{equation*}
   (\partial \psi)^{\ins{T}}(r,\beta) = 
   \left(
   \begin{array}{c|c}
    2 \XX_b^{}  \XX_b^{\ins{T}} r & \XX_I \\ \hline
     0 & \lambda \delta_\beta \circ P_\beta
   \end{array}
   \right)
\end{equation*}
has full rank.
Indeed, consider the matrix $A = (2 \XX_b^{}  \XX_b^{\ins{T}} r | \XX_I)$.
Let $\alpha  = (s,u)^{\ins{T}} \in \RR \times \RR^{\abs{I}}$ such that $A \alpha = 0$.
Then $(2s \XX_b r, u)^{\ins{T}} \in \Ker (\XX_b | \XX_I)$.
Since $(\XX_b | \XX_I)$ has full rank, we conclude that $\alpha = 0$.
As a consequence, $\partial \psi(r,\beta)$ is non-degenerated.
Finally, $\Hh_{I,b}^\uparrow$ is a manifold of dimension $Q - 1$.

\textbf{2.} We prove that $\Hh_{I,b}$ is of Hausdorff dimension less or equal to $Q - 1$.
Consider the following mapping
\begin{equation*}
  \phi : 
  \left\{
  \begin{array}{ccc}
    \RR^Q \times \RR^{\abs{I}} & \to & \RR^Q \times \RR^{\abs{I}} \\
    (r,\beta) & \mapsto & (r + \XX_I \beta, \XX_I^{\ins{T}} r)
  \end{array}.
  \right.
\end{equation*}
The mapping $\phi$ is a $\Cc^1$-diffeomorphism between $\RR^Q \times \RR^{\abs{I}}$ and itself.
Thus, $A = \phi(\Hh_{I,b}^\uparrow)$ is a manifold of dimension $Q-1$.
We now introduce the projection
\begin{equation*}
  \pi : 
  \left\{
  \begin{array}{ccc}
    A & \to & \RR^Q \\
    (y,\alpha) & \mapsto & y
  \end{array} .
  \right.
\end{equation*}
Observe that $\Hh_{I,b} = \pi(A)$.
According to Hausdorff measure properties \cite{rogers1998hausdorff}, since $\pi$ is 1-Lipschitz, the Hausdorff dimension of $\pi(A)$ is less or equal to the Hausdorff dimension of $A$ which is the dimension of $A$ as a manifold, namely $Q-1$.
Hence, the measure of $\Hh_{I,b}$ w.r.t the Lebesgue measure of $\RR^Q$ is zero.

\textbf{3.} According to Theorem \ref{thm-local}, $y \mapsto \xsoly(y)$ is $\Cc^1$ on $\RR^Q \setminus \Hh$.
Composing by $\XX$ gives that $\hat{\mu}$ is differentiable almost everywhere, hence weakly differentiable.
Moreover, taking the divergence of the differential \eqref{eq-differential}, one obtains \eqref{eq:diverg}.
This formula is verified almost everywhere, outside the set $\Hh$.
Stein's Lemma \cite{stein1981estimation} gives the unbiased propertiy of our estimator $\hat \DOF$ of the DOF.

\subsection{Proof of Corollary \ref{cor-particular}} 
\label{sub:proof-cor}
When $\XX = \Id$,  the solution of $\lasso$ is a block soft thresholding
\begin{equation}\label{eq:block-soft}
  [\xsoly(y)]_b = 
  \begin{cases}
      0 & \text{if } \norm{y_b} \leq \lambda \\
      (1 - \frac{\lambda}{\norm{y_b}}) y_b & \text{otherwise}
    \end{cases} .
\end{equation}
For every $b \in J$, we differentiate equation \eqref{eq:block-soft}
\begin{equation*}
  [\partial \xsoly(y)]_b :  \alpha \in \RR^{\abs{b}} \mapsto \alpha - \frac{\lambda}{\norm{y_b}} P_{y_b^\bot}(\alpha) .
\end{equation*}
Since $P_{y_b^\bot}(\alpha)$ is a projector on space of dimension $\abs{b} - 1$, one has $\tr (P_{y_b^\bot}) = \abs{b} - 1$.

\bibliographystyle{icml2012}

\end{document}